\documentclass[12pt, a4paper, reqno]{amsart}
\usepackage{graphicx}
\usepackage{amssymb, amsthm, xcolor}
\usepackage[centering, heightrounded]{geometry}
\usepackage[small,bf]{caption}
\usepackage[subrefformat=parens]{subcaption}
\captionsetup{compatibility=false}

\numberwithin{equation}{section}

\newtheorem{theorem}{Theorem}[section]
\newtheorem{lemma}[theorem]{Lemma}

\theoremstyle{definition}
\newtheorem{definition}[theorem]{Definition}
\theoremstyle{remark}

\renewcommand{\l}{\mathopen{}\mathclose\bgroup\left}
\renewcommand{\r}{\aftergroup\egroup\right}

\def\norm[#1]{\left\Vert #1 \right\Vert}
\def\abs[#1]{\left\vert #1 \right\vert}

\def\tbra[#1,#2]{\left\langle #1 , #2\right\rangle} %

\title[Rearrangement inequalities]{Rearrangement inequalities of the one-dimensional maximal functions associated with general measures }


\author[X. NIE]{Xudong Nie}
\address{Department of Mathematics
and Physics, Shijiazhuang Tiedao University, Shijiazhuang 050043, P.R. China}
\email{nxd2016@stdu.edu.cn}

\author[D. Wu]{Di Wu}
\address{School of Science, Zhejiang University of Science and Technology, Hangzhou 310023, P.R. China}
\email{wudi08@mails.ucas.ac.cn}

\author[P. wang]{Panwang Wang}
\address{Department of Mathematics
and Physics, Shijiazhuang Tiedao University, Shijiazhuang 050043, P.R. China}
\email{wangpw@stdu.edu.cn}


\begin{document}

\setcounter{tocdepth}{1}

\begin{abstract}
We prove a rearrangement inequality for the  uncentered Hardy-Littlewood maximal function $M_{\mu}$ associate to general measure $\mu$ on $\mathbb{R}$.
This inequality is analogous to the Stein's result
 $cf^{**}(t)\leq(Mf)^{*}(t)\leq C f^{**}(t)$, where $f^*$ is the symmetric decreasing rearrangement function of $f$ and $f^{**}(t)=\int_0^tf^*(x)dx$.
 Moreover, we compute the best constant of $M_{\mu}$ on
 $L^{p,\infty}(\mathbb{R},d\mu)$.
\end{abstract}


\maketitle

\section{Introduction}
The decreasing rearrangement of a real-valued measurable function $f$
is the unique radial monotone decreasing function $f^*$ on $\mathbb{R}^n$
that is upper-continues and equimeasurable with $|f|$.
Usually,  equimeasurable rearrangements are used in analysis and mathematical
physics to study extremal problems.
We can reduce extremal problems involving functions on higher-dimentional
spaces to problems for functions of a single variable.
Moveover, the norms of $f$ and $f^*$ are equivalence in many commonly-used
function spaces, including the Lebesgue spaces $L^p$, the Lorentz spaces $L^{p,q}$
and the Orlicz spaces $L^{\phi}$,
making the rearrangement a very useful tool.

Rearrangement function was firstly introduced by Hardy, Littlewood, and P$\acute{o}$lya.
In the last chapter of the book \cite{hardy1},
they introduced two inequalities, the Hardy-littlewood inequality
\begin{equation}\label{Sec1.2}
\int_{\mathbb{R}}f(x)g(x)dx\leq \int_{\mathbb{R}}f^{*}(x)g^{*}(x)dx,
\end{equation}
and the Riesz rearrangement inequality
\begin{equation}\label{In001}
\int_{\mathbb{R}}f*g(x)h(x)dx\leq
\int_{\mathbb{R}}f^**g^*(x)h^*(x)dx,
\end{equation}
where $f*g(x)=\int_{\mathbb{R}}f(x-y)g(y)dy$.
The Hardy-Littlewood inquality (\ref{Sec1.2}) was generalized to $\mathbb{R}^n$ by Bennett in
\cite{B2}.
 The inequality (\ref{In001}) was generalized  to $\mathbb{R}^n$ by Sobolev in \cite{Sob}, and the inequality is
also called by Riesz-Sobolev inequality.
In \cite{R1}, Brascamp, Lieb and Luttinger found a generalization
of Riesz-Sobolev inequality to product of more than three positive functions:
\begin{equation}\label{wuyong01}
\int_{(\mathbb{R}^n)^p}
\prod_{j=1}^{k}f_j\left(\sum_{m=1}^{p}a_{jm}x_m\right)dx\leq
\int_{(\mathbb{R}^n)^p}
\prod_{j=1}^{k}f^*_j\left(\sum_{m=1}^{p}a_{jm}x_m\right)dx,
\end{equation}
where $a_{jm}$ are positive constants, $dx=dx_1\ldots dx_p$
and each $x_i\in\mathbb{R}^n$.
Further generalization of (\ref{wuyong01}) were given by
M.Christ \cite{C.001} and R.E.Pfiefer \cite{Pfi}.
Moreover, C.Draghici \cite{Dra} studied the rearrangement inequalities on other
metric spaces.

Inspired by  (\ref{In001}), we  studied the rearrangement inequality for
maximal convolution operator.
Let $\Phi$ be a nonnegative symmetric  decreasing function with $\|\Phi\|_{L^1}=1$.
Define $M_{\Phi}f(\cdot)=\sup_{|y-\cdot|\leq t}\Phi_t*g(y)$, where
$\Phi_t(\cdot)=\frac{1}{t^n}\Phi(\frac{\cdot}{t})$.
It is a nature question whether
 the analogy Riesz-Sobolev   inequality
\begin{equation}\label{in 002}
\int_{\mathbb{R}^n}M_{\Phi}f(x)g(x)dx
\leq
\int_{\mathbb{R}^n}M_{\Phi}f^*(x)g^*(x)dx
\end{equation}
holds.
When $n=1$ and $\Phi=\chi_{[-\frac12,\frac12]}$, we have proved
the inequality (\ref{in 002}) hold in \cite{Nie}.
In this sitiuation, the function $M_{\Phi}f$ become the uncentered
Hardy-Littlewood maximal function of $f$.

In this paper,  we will consider the uncentered maximal operator associated to general
positive Borel measure.
Suppose $\mu$ is a positive Borel measure.
Let $f$ be a $\mu$-measurable nonnegative function.
The uncentered  Hardy-Littlewood maximal function of $f$ corresponding to the
measure $\mu$  is defined by
$$
M_{\mu}f(x)=\sup_{x\in B,\mu(B)\neq 0}\frac{1}{\mu(B)}
\int_{ B}f(y)d\mu(y),
$$
where $B$ is the Euclidean ball.
If $\mu$ is the Lebesgue measure, we denote $M_{\mu}$ by $M$ for short.
When $n=1$,
Grafakos \cite{A2} showed that the   uncertered maximal operator
corresponding to the measure $\mu$
maps a function from $L^p$ to $L^p$ with upper bound $C_p$, where
$C_p$ is the unique positive of the solution
$$
(p-1)x^p-px^{p-1}-1=0.
$$
Moveover, Grafakos \cite{A3} proved that $C_p$ is sharp
when $\mu$ is  the Lebesgue measure.
In this paper, we will compute the best weak-$L^p$ constant
for maximal operators associated to general measures.

Another motivation for our study is   the arrangement inequalities for maximal functions given by Stein.
Denote $f^{**}(t)=\frac1t\int_{0}^{t}f^{*}(s)ds$.
Stein \cite{B2} showed  the  rearrangement inequality
$$
cf^{**}(t)\leq(M_{\mu}f)^*(t)\leq C f^{**}(t)
$$
holds for each $t>0$, where
$c$ and $C$ are positive constants independent of $f$ and $t$.
Pick \cite{PIC} generalized this result to the fractional maximal operator
$$
(M_{\gamma})f(x)=\sup_{x\in B}|B|^{\frac{\gamma}{n}-1}\int_{B}|f(y)|dy,
$$
and showed that
$$
c\sup_{t\leq\tau\leq\infty}\tau^{\gamma/n}f^{**}(\tau)
\leq(M_{\gamma}f)^*(t)\leq C\sup_{t\leq \tau\leq\infty}\tau^{\gamma/n}f^{**}(\tau)
$$
holds for $0<\gamma\leq n$.

Our main result is as following.
\begin{theorem}\label{Th1}
Suppose $f$ is a positive $\mu$-measurable function on $\mathbb{R}$ satisfying
(\ref{Sec1.1}). Then
we have
\begin{eqnarray}\label{main equality 1}
\left(M_{\mu}f\right)^*(x)\leq Mf^*(x)
\end{eqnarray}
for all $x\in \mathbb{R}$.
\end{theorem}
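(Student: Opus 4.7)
My plan is to reduce the pointwise inequality to the distributional inequality $\mu(\{M_\mu f > \lambda\}) \leq |\{Mf^* > \lambda\}|$ for every $\lambda > 0$, and then to prove the latter using a one-dimensional level-set analysis combined with the Hardy-Littlewood rearrangement inequality.

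The reduction is straightforward. $(M_\mu f)^*$ is symmetric decreasing on $\mathbb{R}$ by definition of the rearrangement, and $Mf^*$ is symmetric decreasing because $f^*$ is, and the uncentered Lebesgue maximal operator sends symmetric decreasing functions to symmetric decreasing functions (any averaging interval can be slid toward the origin without decreasing its average). For two symmetric decreasing functions the pointwise comparison is equivalent to containment of their level sets, and via the equimeasurability $|\{(M_\mu f)^* > \lambda\}| = \mu(\{M_\mu f > \lambda\})$ this translates into the displayed distributional inequality.

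To prove the distributional inequality, I would first analyze the structure of $E_\lambda := \{M_\mu f > \lambda\}$. By lower semi-continuity of $M_\mu f$, this set is open and in $\mathbb{R}$ decomposes as a countable disjoint union of open intervals $(a_k, b_k)$. For each $x \in (a_k, b_k)$ there is a witness interval $I \ni x$ with $\int_I f\,d\mu > \lambda\,\mu(I)$; such a witness must lie inside the component, for otherwise some point outside $E_\lambda$ would inherit $M_\mu f > \lambda$. Splitting witnesses at $x$ into right and left halves yields $E_\lambda = S^+ \cup S^-$, where $S^\pm := \{M_\mu^\pm f > \lambda\}$ and $M_\mu^\pm$ are the one-sided uncentered maximal operators. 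A rising-sun argument applied to each $M_\mu^\pm$ decomposes $S^\pm$ into open intervals on which the exact identity $\int f\,d\mu = \lambda\,\mu$ holds. Applying the Hardy-Littlewood rearrangement inequality $\int_A f\,d\mu \leq \int_{-\mu(A)/2}^{\mu(A)/2} f^*(s)\,ds$ on each such component---and, to combine the $S^+$ and $S^-$ contributions, to the function $\mathbf{1}_{S^+} + \mathbf{1}_{S^-}$---produces Lebesgue intervals on which the average of $f^*$ is at least $\lambda$, hence $|\{Mf^* > \lambda\}| \geq \mu(E_\lambda)$. A short approximation in $\lambda$ handles boundary values where strict vs.\ non-strict inequality differs.

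The main obstacle is the assembly of the $S^+$ and $S^-$ contributions into the sharp constant $1$. A sample calculation (e.g.\ $f = \mathbf{1}_{[-1,1]}$ on $\mathbb{R}$ with Lebesgue $\mu$ and $\lambda = 1/2$, where $E_\lambda = (-3,3)$ but $\int_{-3}^{3} f\,dx = 2 < 3 = \lambda|E_\lambda|$) shows that the $\mu$-average of $f$ over a whole component $(a_k, b_k)$ of $E_\lambda$ can be strictly less than $\lambda$, so the most direct Calder\'on-Zygmund argument delivers only the constant $2$ of the weak-type $(1,1)$ inequality. Extracting the tight constant $1$ requires the finer rising-sun decomposition of $M_\mu^\pm$ together with an analysis of asymmetric averaging intervals for $f^*$ in the rearranged space, corresponding to the one-sided structure of $S^\pm$; this is where I expect the bulk of the technical work to lie.
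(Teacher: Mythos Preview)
Your reduction to the distributional inequality $\mu(\{M_\mu f>\lambda\})\le |\{Mf^*>\lambda\}|$ and your diagnosis of the obstacle are both correct and coincide with the paper's strategy. Your one-sided decomposition $E_\lambda=S^+\cup S^-$ is essentially equivalent to what the paper does: it covers $E_\lambda$ by witness intervals and uses a one-dimensional covering lemma (Lemma~\ref{Th2.2}) to split them into two pairwise-disjoint subfamilies with unions $F_1,F_2$. Either route produces two sets with $E_\lambda=F_1\cup F_2$ and $\lambda\,\mu(F_i)\le\int_{F_i}f\,d\mu$, and the whole difficulty is the one you name: gluing these without losing a factor of~$2$.

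The gap is the step you explicitly leave open, and the paper's resolution is \emph{not} a finer rising-sun analysis of $S^\pm$ (which you do not need) but a repacking trick. Set $b=\tfrac12\mu(F_1\cap F_2)$, $b_{12}=\tfrac12\mu(F_1\setminus F_2)$, $b_{21}=\tfrac12\mu(F_2\setminus F_1)$, and rearrange $f$ separately on the three disjoint pieces $F_1\cap F_2$, $F_1\setminus F_2$, $F_2\setminus F_1$. Stacking the three resulting symmetric-decreasing pieces side by side yields an auxiliary even function $f^\diamond$ supported on $(-a,a)$ with $a=b+b_{12}+b_{21}=\tfrac12\mu(E_\lambda)$; it is not decreasing, but it is \emph{distributionally symmetric} and satisfies $(f^\diamond)^*\le f^*$. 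The two inequalities $\lambda\mu(F_i)\le\int_{F_i}f\,d\mu$ then become lower bounds for $\int f^\diamond$ over the two \emph{asymmetric} intervals $(-b,\,b+b_{12}+b_{21})$ and $(-(b+b_{12}+b_{21}),\,b)$, each of length $2b+b_{12}+b_{21}$. The key lemma (Lemma~\ref{Lm01.1}) --- exactly the ``analysis of asymmetric averaging intervals'' you anticipated --- says that for any distributionally symmetric $g$ and any $\alpha,\beta\ge0$ one has $\int_{-\alpha}^{\beta}g\le\int_{-\alpha}^{\beta}g^*$. Applying it and $(f^\diamond)^*\le f^*$ shows both asymmetric intervals lie in $\{Mf^*>\lambda\}$; their union is $(-a,a)$, of Lebesgue measure $2a=\mu(E_\lambda)$.

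So your outline is sound but stops just short of the decisive idea. Your proposed tool (exact rising-sun equalities on the components of $S^\pm$) would still leave you facing the same overlap $S^+\cap S^-$; what actually closes the argument is the three-piece repacking into $f^\diamond$ together with the asymmetric-interval inequality for distributionally symmetric functions.
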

As an application of Theorem \ref{Th1},
we give the best constant of $M_{\mu}$ from
$L^{p,\infty}(\mathbb{R},d\mu)$ to itself.
\begin{theorem}\label{TH2}
Let $C_p$ be the  unique positive of the solution
$$
(p-1)x^p-px^{p-1}-1=0.
$$
Then we have
$$
\left\|M_{\mu}f\right\|_{L^{p,\infty}(\mathbb{R},d\mu)}
\leq C_p\left\|f\right\|_{L^{p,\infty}(\mathbb{R},d\mu)}.
$$
\end{theorem}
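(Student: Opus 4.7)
The plan is to use Theorem~\ref{Th1} to pull the weak-type estimate back to the Lebesgue case and then extract the sharp constant from an extremal computation. First, since the $L^{p,\infty}$-quasinorm depends only on the distribution function, equimeasurability of the rearrangement gives
\[
\|M_\mu f\|_{L^{p,\infty}(\R,d\mu)}=\|(M_\mu f)^*\|_{L^{p,\infty}(\R,dx)},\qquad \|f\|_{L^{p,\infty}(\R,d\mu)}=\|f^*\|_{L^{p,\infty}(\R,dx)}.
\]
Combined with the pointwise bound $(M_\mu f)^*(x)\le Mf^*(x)$ from Theorem~\ref{Th1} and monotonicity of the $L^{p,\infty}$-quasinorm, the theorem reduces to proving
\[
\|Mg\|_{L^{p,\infty}(\R,dx)}\le C_p\,\|g\|_{L^{p,\infty}(\R,dx)}
\]
for the symmetric decreasing function $g=f^*$ on $\R$.

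Next I would replace $g$ by a canonical envelope. Writing $K\ce\|g\|_{L^{p,\infty}(\R,dx)}$, the fact that $\{g>\lambda\}$ is a symmetric open interval of length at most $(K/\lambda)^p$ yields the pointwise bound $g(x)\le K(2|x|)^{-1/p}$. Since $M$ is monotone in its argument and $\|(2|\cdot|)^{-1/p}\|_{L^{p,\infty}(\R,dx)}=1$, it would then suffice to establish the extremal identity
\[
M(|\cdot|^{-1/p})(x)=C_p\,|x|^{-1/p}.
\]
Indeed, chaining the envelope with this identity produces $Mg(x)\le C_p K(2|x|)^{-1/p}$, and taking $L^{p,\infty}$-quasinorms collapses the right-hand side to $C_pK$.

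The hard part will be verifying this extremal identity and, in particular, tracing the appearance of $C_p$. For $x>0$, the standard symmetric-decreasing reduction shows that every optimal averaging interval may be taken of the form $[-a,b]$ with $a\ge 0$ and $b\ge x$, so
\[
M(|\cdot|^{-1/p})(x)=\frac{p}{p-1}\sup_{a\ge 0,\,b\ge x}\frac{a^{(p-1)/p}+b^{(p-1)/p}}{a+b}.
\]
The integrand is $(-1/p)$-homogeneous in $(a,b)$, so scaling down by $x/b$ shows the optimum is attained at $b=x$; writing $a=s^{p}x$ and setting the derivative in $s$ to zero reduces the critical-point condition to $s^p+ps=p-1$. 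A direct substitution $s=1/C_p$ converts this into $(p-1)C_p^p-pC_p^{p-1}-1=0$, exactly the defining equation in the theorem, and plugging the critical $s$ back into the supremum yields $M(|\cdot|^{-1/p})(x)=C_p\,x^{-1/p}$. I expect this algebraic identification of the critical point with the root of the given equation to be the only technical step requiring real care; once it is settled, the chain of reductions above delivers the theorem.
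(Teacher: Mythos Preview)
Your proposal is correct and follows essentially the same route as the paper: reduce via Theorem~\ref{Th1} and equimeasurability to the Lebesgue case, dominate $f^*$ by the extremal envelope $K(2|x|)^{-1/p}$, and then compute $M$ of the power function by a one-variable optimization whose critical point is identified with the root $C_p$. The only cosmetic difference is that you invoke the $(-1/p)$-homogeneity explicitly to pin the right endpoint at $b=x$ before optimizing, whereas the paper asserts directly that the optimal interval has the form $[-s,t]$ and then differentiates in $s$; the resulting critical-point equations are equivalent after the change of variables $w=s_0/t=s^p$.
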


\section{Preliminaries and Lemmas}
Let $\mu$ be a positive Borel measure on $\mathbb{R}^n$
and $f$ be a $\mu$-measurable function
which satisfies the condition
\begin{equation}\label{Sec1.1}
\mu\left(\{|f|>t\}\right)<\infty
\end{equation}
 for each $t>0$, where we write
$$\{|f|>t\}=\{x\in\mathbb{R}^n:|f(x)|>t\}.$$
We define the  distribution function of $f$ by
$$
\lambda_{f}(t)=\mu(\{|f|>t\}).
$$
\begin{definition}
Suppose that $\mu$ is a positive Borel measure on $\mathbb{R}^n$
and $f$ is a $\mu$-measurable function
which satisfies (\ref{Sec1.1}).
The  symmetric decreasing rearrangement function of $f$ on
$\mathbb{R}^n$ is defined to be the function $f^*$ such that
$$
f^*(x)=\inf\{t:\lambda_{f}(t)\leq \mu(B(0,|x|))\}.
$$
\end{definition}

It follows that $f^*$  satisfies

(i) $\mu\left(\{|f|>t\}\right)=
\left|
\{f^*>t\}
\right|$
for each $t>0$, where  $\left|\cdot\right|$ denotes the Lebesgue measure,

(ii) $f^*(x)=f^*(y)$ when $|x|=|y|$,

(iii) $f^*(y)\leq f^*(x)$ when $|x|\leq |y|$.

Suppose $E$ is a finite $\mu$-measurable set on $\mathbb{R}$.
the symmetric rearrangement of the set $E$ is defined
to the open interval  centered at the origin
whose length is
the measure of $E$ and denoted by $E^*$.
Thus,
$$
E^*=\left(-\frac{\mu(E)}{2},\frac{\mu(E)}{2}\right).
$$
Suppose $f$ is a $\mu$-measurable function on $\mathbb{R}$ satisfying
(\ref{Sec1.1}).
Lieb \cite{Lieb Sharp constant} showed that
\begin{equation}
f^*(x)=\int_{0}^{\infty}\chi_{\{|f|>t\}}^*(x)dt,
\end{equation}
which is called the cake representation of $f^*$.

Let $E$  be a finite $\mu$-measurable set.
Taking $g=\chi_{E}$, (\ref{Sec1.2}) implies the following lemma.

\begin{lemma}\label{2019031301}
(see \cite{B1})
Suppose $f$ is a positive $\mu$-measurable function on $\mathbb{R}$ satisfying (\ref{Sec1.1}).
Let $E$ be a finite $\mu$-measurable set in $\mathbb{R}$.  Then we have
\begin{equation}
\int_{E}f(x)d\mu(x)\leq \int_{E^*}f^*(x)dx.
\end{equation}
\end{lemma}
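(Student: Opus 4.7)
The plan is to prove this via the layer-cake representation, using only the defining properties of $f^*$ and $E^*$ listed right before the lemma, rather than invoking inequality (\ref{Sec1.2}) directly (the excerpt's inequality (\ref{Sec1.2}) is stated for Lebesgue integrals, and here we need to integrate against $d\mu$ on the left). The two sides turn out to reduce, via layer cakes, to integrals of very explicit quantities that can be compared pointwise in $t$.

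First I would rewrite the left-hand side by the standard layer-cake formula for positive $\mu$-integrable functions:
\begin{equation*}
\int_E f(x)\,d\mu(x)=\int_0^\infty \mu\!\left(\{f>t\}\cap E\right) dt.
\end{equation*}
The trivial monotonicity bound
\begin{equation*}
\mu\!\left(\{f>t\}\cap E\right)\le \min\!\left(\lambda_f(t),\mu(E)\right)
\end{equation*}
holds for every $t>0$, since the intersection sits inside both $\{f>t\}$ and $E$.

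Next I would analyze the right-hand side in the same way. By property (i) of $f^*$, the Lebesgue superlevel set $\{f^*>t\}$ has measure $\lambda_f(t)$, and by properties (ii)--(iii) it is the open symmetric interval $\bigl(-\lambda_f(t)/2,\lambda_f(t)/2\bigr)$. Since $E^*=\bigl(-\mu(E)/2,\mu(E)/2\bigr)$ is also a symmetric interval centered at the origin, one of the two is contained in the other, and in either case
\begin{equation*}
\left|\{f^*>t\}\cap E^*\right|=\min\!\left(\lambda_f(t),\mu(E)\right).
\end{equation*}
Applying the Lebesgue layer-cake formula to $f^*\chi_{E^*}$ then gives
\begin{equation*}
\int_{E^*}f^*(x)\,dx=\int_0^\infty \min\!\left(\lambda_f(t),\mu(E)\right) dt.
\end{equation*}

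Combining the three displays yields the claim. There is no real obstacle here; the only point to check carefully is that the Lebesgue superlevel set of $f^*$ really is the centered interval of length $\lambda_f(t)$, which is forced by the radial monotone structure (ii)--(iii) together with the equimeasurability (i). Once this identification is made, the comparison is a pointwise-in-$t$ inequality between the two integrands in the layer-cake representations, which is precisely the content the lemma claims to deduce from (\ref{Sec1.2}).
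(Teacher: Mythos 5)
Your argument is correct, and it takes a genuinely different (and more self-contained) route than the paper. The paper disposes of this lemma in one line: it takes $g=\chi_E$ in the Hardy--Littlewood inequality (\ref{Sec1.2}), notes $(\chi_E)^*=\chi_{E^*}$, and cites \cite{B1} for the general-measure version. You instead reprove the needed special case from scratch by the layer-cake/bathtub argument: write $\int_E f\,d\mu=\int_0^\infty \mu(\{f>t\}\cap E)\,dt$, bound the integrand by $\min(\lambda_f(t),\mu(E))$, and identify that minimum as $\left|\{f^*>t\}\cap E^*\right|$ because both sets are centered intervals. Each step checks out: the superlevel sets of $f^*$ are indeed centered intervals of Lebesgue measure $\lambda_f(t)$ by properties (i)--(iii), and both $\lambda_f(t)$ and $\mu(E)$ are finite by hypothesis, so the pointwise-in-$t$ comparison integrates to the claim. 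What your approach buys is precisely the point you flag: inequality (\ref{Sec1.2}) as displayed in the introduction is stated for Lebesgue integrals on both sides, whereas the lemma pairs a $\mu$-integral on the left with a Lebesgue integral of the rearrangement on the right, so the paper's one-line derivation silently relies on the general-measure form of Hardy--Littlewood from \cite{B1}. Your proof makes that dependence explicit and elementary at the cost of a few extra lines; it is essentially the standard proof of (\ref{Sec1.2}) specialized to $g=\chi_E$, where the rearranged pairing becomes computable in closed form.
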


Our result is  depend on the special geometry structure
 of $\mathbb{R}$ which is provided in \cite{G2007}.
We show this result here.

\begin{lemma}\label{Th2.2}
Suppose $\mathcal{F}$ is  a finite set of open intervals
in $\mathbb{R}$. Then there exist two subfamilies
$\mathcal{F}_1$ and $\mathcal{F}_2$ such that
$$
I\cap J=\emptyset \,\,\,\,\,\,\mbox{if}\,\,I,J\in \mathcal{F}_i
\,\,\mbox{and}\,\, I\neq J
$$
and
$$
\left(\cup_{I_1\in \mathcal{F}_1}\right)I_1\cup \left(\cup_{I_2\in \mathcal{F}_2}\right)I_2
=\cup_{I\in \mathcal{F}}I.
$$
\end{lemma}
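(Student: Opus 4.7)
The plan is to exhibit $\mathcal{F}_1$ and $\mathcal{F}_2$ explicitly via a greedy left-to-right sweep that builds a chain of intervals whose consecutive members overlap but whose non-consecutive members are disjoint, and then 2-color the chain by parity.

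First I would reduce to the case that the union $U = \bigcup_{I \in \mathcal{F}} I$ is a single open interval $(\alpha, \beta)$: the full union decomposes as a finite disjoint union of open intervals (its connected components), each $I \in \mathcal{F}$ lies in exactly one component, and the desired conclusion on the whole $\mathcal{F}$ follows by treating each component separately and amalgamating the resulting subfamilies.

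Now comes the greedy construction. Writing $I = (a_I, b_I)$, let $I_1 \in \mathcal{F}$ minimize $a_I$ and then maximize $b_I$, and set $b_0 := a_1$. Given $I_1, \ldots, I_k$ with $b_k < \beta$, define
$$
\mathcal{S}_{k+1} := \{I \in \mathcal{F} : b_{k-1} \leq a_I < b_k\}
$$
and choose $I_{k+1} \in \mathcal{S}_{k+1}$ to maximize $b_I$; halt once $b_N \geq \beta$. The central verification is that while $b_k < \beta$, the set $\mathcal{S}_{k+1}$ is nonempty and the selection forces $b_{k+1} > b_k$ strictly. Both follow from one observation: since $b_k \in U$ is covered by $\mathcal{F}$, some $I \in \mathcal{F}$ has $a_I < b_k < b_I$; if $a_I < b_{k-1}$, one locates the unique $j \leq k-1$ with $b_{j-1} \leq a_I < b_j$, whence $I \in \mathcal{S}_{j+1}$ and so $b_I \leq b_{j+1} \leq b_k$, contradicting $b_I > b_k$. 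Hence $a_I \geq b_{k-1}$, i.e.\ $I \in \mathcal{S}_{k+1}$, and $b_{k+1} \geq b_I > b_k$.

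Strict monotonicity of the $b_k$'s forces the $I_k$'s to be distinct, so finiteness of $\mathcal{F}$ makes the process terminate. By construction $a_{k+2} \geq b_k$ for every $k$, hence $I_k \cap I_{k+2} = \emptyset$, and iterating gives pairwise disjointness of $\{I_1, I_3, I_5, \ldots\}$ and of $\{I_2, I_4, I_6, \ldots\}$; meanwhile consecutive $I_k, I_{k+1}$ overlap and the chain reaches $\beta$, so $\bigcup_k I_k = U$. Declaring $\mathcal{F}_1$ and $\mathcal{F}_2$ to be the odd- and even-indexed intervals (amalgamated over components of $U$) yields the claim. The main obstacle will be the nonemptiness/progress argument above: it relies crucially on the maximal-right-endpoint selection rule together with the connectedness secured by the initial reduction, without which the greedy chain could stall or the parity classes could collide.
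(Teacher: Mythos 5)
Your argument is correct, and it is worth noting that the paper itself gives no proof of this lemma at all --- it simply cites Garnett's \emph{Bounded Analytic Functions}, so your write-up supplies a self-contained verification that the paper omits. The standard argument in Garnett first prunes $\mathcal{F}$ to a minimal subcover (no interval contained in the union of the others), observes that in a minimal subcover no point lies in three intervals, and then orders by left endpoints and alternates; your greedy chain builds the alternating subcover directly, which is slightly more constructive but morally the same. Your key verification is sound: after reducing to a single component $(\alpha,\beta)$, the endpoints $b_0<b_1<\cdots$ are strictly increasing, so for any $I\ni b_k$ with $a_I<b_{k-1}$ there is a unique $j\le k-1$ with $b_{j-1}\le a_I<b_j$, forcing $b_I\le b_{j+1}\le b_k$ and yielding the contradiction; hence the chain progresses, terminates with $b_N=\beta$, covers $(\alpha,\beta)$ because $a_{k+1}<b_k$, and the parity classes are pairwise disjoint because $a_{k+2}\ge b_k$. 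Two small points you should make explicit: the lemma only asks that the two subfamilies \emph{cover} $\bigcup_{I\in\mathcal{F}}I$, not that they partition $\mathcal{F}$ (the latter would be false for three pairwise-intersecting intervals), and your reduction/greedy should briefly dispose of degenerate cases (empty or unbounded intervals, $\alpha=-\infty$ or $\beta=+\infty$), all of which your construction handles without change.
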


A positive function  $f$ defined on $\mathbb{R}$ is called distributional
symmetric if
$$
|\{x:x>0\,\,\mbox{and}\,\,f(x)>t\}|=|\{x:x<0\,\,\mbox{and}\,\,f(x)>t\}|
$$
for each $t>0$.

\begin{lemma}\label{Lm01.1}
Suppose $a,b\geq 0$ and $f$ is a distributional symmetric function defined on $\mathbb{R}$.
Then we have
$$
\int_{-a}^{b}f(x)dx\leq \int_{-a}^{b}f^*(x)dx.
$$
\end{lemma}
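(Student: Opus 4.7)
\medskip

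\textbf{Proof plan for Lemma \ref{Lm01.1}.}

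My plan is to reduce both integrals to integrals of distribution functions restricted to $[-a,b]$, and then to compare them at each level $t>0$ using the distributional symmetry hypothesis. Concretely, by the layer-cake representation (which applies since $f \geq 0$) I would write
\begin{equation*}
\int_{-a}^{b} f(x)\,dx = \int_{0}^{\infty} \bigl|\{x\in[-a,b] : f(x)>t\}\bigr|\,dt,
\end{equation*}
and likewise for $f^*$. It then suffices to show the pointwise-in-$t$ inequality
\begin{equation*}
\bigl|\{x\in[-a,b] : f(x)>t\}\bigr| \;\leq\; \bigl|\{x\in[-a,b] : f^*(x)>t\}\bigr|
\end{equation*}
for every $t>0$.

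For fixed $t>0$, set $m(t) \ce |\{x>0 : f(x)>t\}|$. By the distributional symmetry of $f$, this also equals $|\{x<0 : f(x)>t\}|$, so $|\{f>t\}| = 2m(t)$. Since $f^*$ is the symmetric decreasing rearrangement, $\{f^*>t\} = (-m(t),m(t))$, and a direct computation gives
\begin{equation*}
\bigl|\{x\in[-a,b] : f^*(x)>t\}\bigr| = \min(a,m(t)) + \min(b,m(t)).
\end{equation*}
On the other hand, splitting at the origin and using the trivial bounds $|\{f>t\}\cap[-a,0]| \leq \min(a, m(t))$ and $|\{f>t\}\cap[0,b]| \leq \min(b, m(t))$, I obtain the same expression as an upper bound for the left-hand side. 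Integrating in $t$ then yields the lemma.

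The argument is short because the distributional symmetry hypothesis delivers exactly the balance between the two half-lines needed to match the optimal packing achieved by $f^*$ inside the symmetric decreasing set $(-m(t), m(t))$; without that symmetry, one could arrange all mass on the longer side and violate the inequality. So the only real point that needs attention is verifying that the formula for $|\{f^*>t\}\cap[-a,b]|$ is correct in all three regimes $m(t)\leq\min(a,b)$, $\min(a,b)<m(t)\leq\max(a,b)$, and $m(t)>\max(a,b)$; each case is a one-line interval intersection. I do not anticipate a genuine obstacle, as neither regularity assumptions on $f$ nor any ordering between $a$ and $b$ is required.
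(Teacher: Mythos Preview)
Your proposal is correct and follows essentially the same approach as the paper: both proofs apply the layer-cake representation, split $[-a,b]$ at the origin, and bound each half by $\min(a,m(t))$ and $\min(b,m(t))$ respectively, where $m(t)=\tfrac12|\{f^*>t\}|$. Your presentation is slightly more streamlined in that you name $m(t)$ explicitly and compute $|\{f^*>t\}\cap[-a,b]|=\min(a,m(t))+\min(b,m(t))$ directly, whereas the paper reaches the same identity via a case split on whether $t<f^*(a)$ or $t\geq f^*(a)$; the underlying idea is identical.
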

\begin{proof}
Denote $f_a=f\chi_{[-a,0]}$ and $f_b=f\chi_{[0,b]}$.
Obviously, we have $|\{f_a\geq t\}|\leq a$.
Note that $f$ is a distributional symmetric function.
It follows that $|\{f_a\geq t\}|\leq \frac12|\{f^*>t\}|$.
Hence,
\begin{equation}\label{yili21}
|\{f_a\geq t\}|\leq \min\left\{a,\frac12|\{f^*>t\}|\right\}.
\end{equation}
Because $f^*$ is a symmetric decreasing function,
it can be deduced
\begin{equation}\label{yinli22}
|\{x:-a<x<0\,\,\mbox{and}\,\,f^*(x)>t\}|=\left\{
\begin{array}{ll}
a,\quad & \mbox{{\rm if}\quad $t<
f^*(a)$,}\\
\frac12|\{f^*>t\}|,\quad& \mbox{{\rm if} $t\geq f^*(a)$}.\\
\end{array}
\right.
\end{equation}
(\ref{yili21}) and (\ref{yinli22}) imply
\begin{equation}\label{yinli23}
|\{f_a\geq t\}|\leq |\{x:-a<x<0\,\,\mbox{and}\,\,f^*(x)>t\}|.
\end{equation}

Using Fubini lemma,
\begin{eqnarray}\label{eq 2.1}
\int_{-a}^0f(x)dx&=&\int_{\mathbb{R}}f_a(x)dx\nonumber\\
&=&\int_{\mathbb{R}}\int_{0}^{f_a(x)}dsdx\nonumber \\
&=&\int_{0}^{\infty}
|\{f_a>t\}|dt
\end{eqnarray}
By (\ref{yinli23}) and (\ref{eq 2.1}), it follows
\begin{equation}\label{eq 2.2.1}
\int_{-a}^0f(x)dx\leq \int_{-a}^{0}f^*(x)dx.
\end{equation}

Similarly, we can obtain
\begin{equation}\label{eq 2.2}
\int_{0}^bf(x)dx\leq \int_{0}^{b}f^*(x)dx.
\end{equation}
Using (\ref{eq 2.2.1}) and (\ref{eq 2.2}), it can be deduced
$$
\int_{-a}^bf(x)dx\leq \int_{-a}^{b}f^*(x)dx.
$$
The proof is completed.
\end{proof}

\section{The proof of the two Theorems}
\subsection{Theorem \ref{Th1}}

By the layer cake representation for
$\left(M_{\mu}f\right)^*$ and $Mf^*$,
inequality (\ref{main equality 1}) is equivalent to
\begin{equation}
\int_0^{\infty}\chi_{\{x:\left(M_{\mu}f\right)^*(x)>\lambda\}}(x)d\lambda
\leq \int_0^{\infty}\chi_{\{x:Mf^*(x)>\lambda\}}(x)d\lambda.
\end{equation}
Since $|\{x:\left(M_{\mu}f\right)^*(x)>\lambda\}|=
\mu\left(\{x:M_{\mu}f(x)>\lambda\}\right)$,
it is suffice to prove
\begin{equation}
\mu\left(\{x:M_{\mu}f(x)>\lambda\}\right)\leq
|\{x:Mf^*(x)>\lambda\}|
\end{equation}
for all $\lambda>0$.

Denote $E_{\lambda}^{\mu}(f)=
\{x\in\mathbb{R}:M_{\mu}f(x)>\lambda\}$.
When $\mu$ is the Lebesgue measure,
we write $E_{\lambda}(f)$ for simplicity.
Choosing  $x\in E_{\lambda}^{\mu}(f)$,
there is  an open interval $I_x$ such that
\begin{equation}\label{th in1}
\frac{1}{\mu(I_x)}\int_{I_x}fd\mu>\lambda.
\end{equation}
It follows that
$$E_{\lambda}^{\mu}(f)=\bigcup_{x\in E_{\lambda}^{\mu}(f)}I_x.$$
By Lindel$\ddot{o}$f's theorem, we can find
 a countable subcollection $I_j$,
$j=1,2,\ldots$, such that
$$
E_{\lambda}^{\mu}(f)=\bigcup_{j=1}^{\infty}I_j.
$$

Denote
$
F(N)=\bigcup_{j=1}^{N}I_{j}.
$
Using Lemma \ref{Th2.2}, there exists  two subcollections
$\mathcal{A}_1(N)$, $\mathcal{A}_2(N)$ of $\{I_1,I_2,\cdots,I_N\}$
so that
\begin{equation}\label{M3.1}
I\cap J=\emptyset \,\,\,\,\,\,\mbox{if}\,\,I,J\in \mathcal{A}_i(N)
\,\,\mbox{and}\,\, I\neq J
\end{equation}
and
\begin{equation}\label{M3.2}
F(N)=\bigcup_{i=1}^{2}\left(\bigcup_{I\in \mathcal{A}_i(N)}I\right).
\end{equation}

Set $F_i=\bigcup_{I\in \mathcal{A}_i}I$, $i=1,2$.
By  (\ref{th in1}) and (\ref{M3.1}), it can be deduced that
\begin{equation}\label{equ 111}
\mu(F_i)=\sum_{I\in{\mathcal{A}_i}}\mu(I)
<\frac{1}{\lambda}\sum_{I\in\mathcal{A}_i}\int_{I}fd\mu
=\frac{1}{\lambda}\int_{F_i}fd\mu.
\end{equation}

We set $a=\frac12\mu(F_1\cup F_2)$,
$b=\frac12\mu(F_1\cap F_2)$, $b_{12}=\frac12\mu(F_1\backslash F_2)$
and $b_{21}=\frac12\mu(F_2\backslash F_1)$.
Obviously, we have
$$
\mu(F_1\cup F_2)=\mu(F_1\cap F_2)+\mu(F_1\backslash F_2)+\mu(F_2\backslash F_1)
$$
$$
\mu(F_1)=\mu(F_1\cap F_2)+\mu(F_1\backslash F_2)
$$
and
$$
\mu(F_2)=\mu(F_1\cap F_2)+\mu(F_2\backslash F_1).
$$
Hence
\begin{equation}
a=b+b_{12}+b_{21},
\end{equation}
\begin{equation}\label{M3.1.6}
\mu(F_1)=2(b+b_{12}),
\end{equation}
and
\begin{equation}\label{M3.1.7}
\mu(F_2)=2(b+b_{21}).
\end{equation}

Set
$$
f_1=f\chi_{F_1\cap F_2},\ \ \ f_{12}=f\chi_{F_1\backslash F_2}\ \ \ \mbox{and}\ \ \
f_{21}=f\chi_{F_2\backslash F_1}.
$$
Rearranging $f_{1}$,
$f_{12}$ and $f_{21}$,
we obtain corresponding symmetric-decreasing functions
$f_1^*$, $f_{12}^*$ and $f_{21}^*$  respectively.
It follows from Lemma \ref{2019031301} that
\begin{equation}\label{M3.1.62}
\int_{F_1\cap F_2}f(x)d\mu(x)\leq
\int_{\left(F_1\cap F_2\right)^*}f_1^*(x)dx,
\end{equation}
\begin{equation}\label{M3.1.63}
\int_{F_1\backslash F_2}f(x)d\mu(x)\leq
\int_{\left(F_1\backslash F_2\right)^*}f_{12}^*(x)dx,
\end{equation}
and
\begin{equation}\label{M3.1.72}
\int_{F_2\backslash F_1}f(x)d\mu(x)\leq
\int_{\left(F_2\backslash F_1\right)^*}f_{21}^*(x)dx.
\end{equation}
Since $F_1=(F_1\cap F_2)\cup (F_1\backslash F_2)$
and $F_2=(F_2\cap F_1)\cup (F_2\backslash F_1)$,
using (\ref{equ 111}) we obtain
\begin{equation}\label{M3.1.61}
\mu(F_1)\leq\frac{1}{\lambda}
\left(\int_{F_1\cap F_2}f_1(x)dx
+\int_{F_1\backslash F_2}f_{12}(x)dx\right),
\end{equation}
and
\begin{equation}\label{M3.1.73}
\mu(F_2)\leq\frac{1}{\lambda}
\left(\int_{F_1\cap F_2}f_1(x)dx
+\int_{F_2\backslash F_1}f_{21}(x)dx\right).
\end{equation}
Combining (\ref{M3.1.7}),
(\ref{M3.1.62}), (\ref{M3.1.63}) and (\ref{M3.1.61}), we  have
\begin{equation}\label{main 0001}
2(b+b_{12})<\frac{1}{\lambda}
\left(\int_{-b}^{b}f_1^*(x)dx
+\int_{-b_{12}}^{b_{12}}f_{12}^*(x)dx\right)
\end{equation}
Combining (\ref{M3.1.6}),
(\ref{M3.1.62}), (\ref{M3.1.72}) and (\ref{M3.1.73}), we have
\begin{equation}\label{main 0002}
2(b+b_{21})<\frac{1}{\lambda}
\left(\int_{-b}^{b}f_1^*(x)dx
+\int_{-b_{21}}^{b_{21}}f_{21}^*(x)dx\right)
.
\end{equation}
Since $f_1^*$, $f_{12}^*$ and $f_{21}^*$ are even functions, it can be deduced that
\begin{equation}\label{main 0001}
b+b_{12}<\frac{1}{\lambda}
\left(\int_{0}^{b}f_1^*(x)dx
+\int_{0}^{b_{12}}f_{12}^*(x)dx\right)
\end{equation}
and
\begin{equation}\label{main 0002}
b+b_{21}<\frac{1}{\lambda}
\left(\int_{-b}^{0}f_1^*(x)dx
+\int_{0}^{b_{21}}f_{21}^*(x)dx\right)
.
\end{equation}
Add both sides  of equations (\ref{main 0001}) and (\ref{main 0002}), we obtain
\begin{equation}\label{main 00020}
2b+b_{12}+b_{21}<\frac{1}{\lambda}
\left(\int_{-b}^{b}f_1^*(x)dx+\int_{0}^{b_{12}}f_{12}^*(x)dx
+\int_{0}^{b_{21}}f_{21}^*(x)dx\right).
\end{equation}

Define a new function by
\begin{equation*}
f^{\diamond}(x)=\left\{
\begin{array}{ll}
f_1^*(x) & 0\leq |x|<b\\
f_{12}^*(x-b)& b\leq |x|<b+b_{12}\\
f_{21}^*(x-b-b_{12})& b+b_{12}\leq |x|<b+b_{12}+b_{21}\\
0& others
\end{array}
\right.
\end{equation*}
Thus, (\ref{main 00020}) implies
\begin{equation}\label{M0001}
2b+b_{12}+b_{21}<\frac{1}{\lambda}
\left(\int_{-b}^{b+b_{12}+b_{21}}f^{\diamond}(x)dx\right).
\end{equation}
An argument similar to  the proof of (\ref{M0001}) shows that
\begin{equation}\label{M0002}
2b+b_{12}+b_{21}<\frac{1}{\lambda}
\left(\int_{-b-b_{12}-b_{21}}^{b}f^{\diamond}(x)dx\right).
\end{equation}

By the definition, $f^{\diamond}$ is a distribution symmetric  function and
\begin{equation}\label{xin}
\left(f^{\diamond}\right)^*\leq f^*.
\end{equation}
.
It can be deduced from (\ref{M0001}) (\ref{xin}) and Lemma \ref{Lm01.1} that
\begin{equation}\label{201905111}
2b+b_{12}+b_{21}
\leq \frac{1}{\lambda}
\left(\int_{-b}^{b+b_{12}+b_{21}}f^{*}(x)dx\right).
\end{equation}
On the other hand, by (\ref{M0002}) (\ref{xin}) and Lemma \ref{Lm01.1}, we have
\begin{equation}\label{201905112}
2b+b_{12}+b_{21}
\leq \frac{1}{\lambda}
\left(\int_{-(b+b_{12}+b_{21})}^{b}f^{*}(x)dx\right).
\end{equation}
Thus,
we can deduce from (\ref{201905111})
and
(\ref{201905112}) that
\begin{equation}
\frac{1}{2b+b_{12}+b_{21}}\int_{-b}^{b+b_{12}+b_{21}}f^*(x)dx>\lambda,
\end{equation}
and
\begin{equation}
\frac{1}{2b+b_{12}+b_{21}}\int_{-(b+b_{12}+b_{21})}^{b}f^*(x)dx>\lambda.
\end{equation}
Now it is easy to know
$$
(-b,b+b_{12}+b_{21})\subset
\{x:Mf^*(x)>\lambda\}
$$
and
$$
(-(b+b_{12}+b_{21}),b)\subset
\{x:Mf^*(x)>\lambda\}.
$$
Thus we have
\begin{equation}\label{2019051103}
\left(-(b+b_{12}+b_{21}),b+b_{12}+b_{21}\right)\subset
\{x:Mf^*(x)>\lambda\}.
\end{equation}
This implies that
$$
\mu\left(F^N\right)=\mu\left(F_1\cup F_2\right)=2(b+b_{12}+b_{21})\leq |\{x:Mf^*(x)>\lambda\}|.
$$
Since $F^N$ is an increasing sequence of $\mu$-measurable
sets and its union is $\{x:M_{\mu}f(x)>\lambda\}$,
the result follows by letting $N\rightarrow \infty$.
\subsection{Theorem \ref{TH2}}
Without of generality, we assume $\|f\|_{L^{p,\infty}(R,d\mu)}=1$.
Thus we have
\begin{equation*}\label{TH2 eq1}
\|f^*\|_{L^{p,\infty}(R,dm)}=
\|f\|_{L^{p,\infty}(R,d\mu)}=1,
\end{equation*}
since $f$ and $f^*$
are equimeasurable.
By the definition of $\|f\|_{L^{p,\infty}(R,d\mu)}$,
we have
\begin{equation}
\|f\|_{L^{p,\infty}(\mathbb{R},d\mu)}=
\sup_{t\in\mathbb{R}}|2t|^{\frac{1}{p}}f^*(t).
\end{equation}
It follows that
$f^*(t)\leq |2t|^{-\frac1p}$.
Denote $a(t)=|2t|^{-\frac1p}$.
Hence,
\begin{equation}\label{zuihou}
Mf^*(t)\leq Ma(t).
\end{equation}

Nextly, let us compute $Ma(t)$.
Fixing $t>0$,
observe that $a(t)$ is radial decreasing.
Hence, there exists $s>0$ such that
$$
Ma(t)=\sup_{a>0}\frac{1}{a+s}\int_{-s}^{t}a(u)du.
$$
Let $b(s)=\frac{1}{a+s}\int_{-s}^{t}a(u)du$.
Thus the derived function of $b(s)$ is
$$
\frac{db(s)}{ds}=-\frac{1}{(s+t)^2}\int_{-s}^{t}a(u)du
+\frac{a(s)}{s+t}.
$$
Then the maximum point $s_0$ of $b(s)$ satisfies
\begin{equation}\label{TH2 eq2}
\frac{1}{s_0+t}\int_{-s_0}^{t}a(u)du=a(s_0).
\end{equation}
On the other hand, we have
\begin{eqnarray}\label{TH2 eq3}
\int_{-s}^{t}a(u)du&=&\int_{-s}^0(-2u)^{-\frac1p}du
+\int_{0}^{t}(2u)^{-\frac1p}du\nonumber\\
&=&\frac{p}{p-1}2^{-\frac1p}\left(s^{1-\frac1p}+t^{1-\frac1p}\right)
\end{eqnarray}

Set $w=\frac{s_0}{t}$.
Combing (\ref{TH2 eq2}) and (\ref{TH2 eq3}),
$w$ satisfies the equation
$$
\frac{1}{p-1}w^{1-\frac1p}-w^{-\frac1p}+\frac{p}{p-1}=0.
$$
This tells us that $w$ is positive, unique
and equal to $c_p^{-p}$.
Since $w$ is unique, by ($\ref{TH2 eq1}$),
we get
$$
Ma(t)=\frac{1}{t+s_0}\int_{-s}^{t}a(u)du=|2s_0|^{-\frac1p}
=c_p|2t|^{-\frac1p}.
$$
Hence
$$
\|Ma\|_{L^{p,\infty},(\mathbb{R},dm)}\leq c_p.
$$
It can be deduced  from (\ref{zuihou}) that
$$
\|M_{\mu}f\|_{L^{p,\infty},(\mathbb{R},d\mu)}\|
\leq \|Mf^*\|_{L^{p,\infty},(\mathbb{R},d\mu)}\|
\leq c_p.
$$
The proof is completed.

\section*{Acknowledgments}

The research was supported by the
Hebei Province introduced overseas student support projects
 (Grant Nos.C20190365), NSF of Zhejiang Province of China (Grant
No. LQ18A010002) and National Natural Foundation of China (Grant
No.12001488).


\end{document}